\newtheorem{theorem}{Theorem}[section]
\newtheorem{example}[theorem]{Example}
\newtheorem{lemma}[theorem]{Lemma}
\newtheorem{proposition}[theorem]{Proposition}
\newtheorem{observation}[theorem]{Observation}
\newcommand{\eat}[1]{}
\newcommand{\grass}[2]{G_{{#1},{#2}}}
\newcommand{\taumn}[2]{w_{#1,#2}}
\newcommand{\gmodp}{G/P}
\newcommand{\gmnmodt}[2]{T \backslash\mkern-6mu\backslash (\grass{#1}{#2})}
\newcommand{\xtaumodt}[2]{T\backslash\mkern-6mu\backslash X(\taumn{#1}{#2})}
\newcommand{\schbmodt}[1]{T\backslash\mkern-6mu\backslash X(#1)}
\begin{document}
\title{Smooth torus quotients of Schubert varieties in the Grassmannian}
\author{
Sarjick Bakshi
\thanks{Chennai Mathematical Institute, Chennai, India, {\tt sarjick@cmi.ac.in}}
\and
S. Senthamarai Kannan \thanks{Chennai Mathematical Institute, Chennai,
  India, {\tt kannan@cmi.ac.in}}
\and
K.Venkata Subrahmanyam \thanks{Chennai Mathematical Institute,
  Chennai, India, {\tt kv@cmi.ac.in}}
}

\date{}
\maketitle

\begin{abstract}

Let $r < n$ be positive integers and further suppose $r$ and $n$ are coprime. We study the GIT quotient of Schubert varieties $X(w)$ in the Grassmannian $\grass{r}{n}$, admitting semistable points for the action of $T$ with respect to the $T$-linearized line bundle ${\cal L}(n\omega_r)$. We give necessary and sufficient combinatorial conditions for the GIT quotient 
$\schbmodt{w}^{ss}_{T}({\cal L}(n\omega_r))$ to be smooth.
\end{abstract}

\section{Introduction}\label{s.Introduction}
Let $G=SL(n,\mathbb{C})$. Let $T$ be the subgroup of all diagonal matrices in $G$, and $B$ the 
subgroup of all upper triangular matrices in $G$ and $B^{-}$ the subgroup of all lower triangular matrices in $G$. Let $X(T)$ denote the group of characters of $T$.  In the root system $R$ of $(G,T)$ let $R^{+}$ denote the set of positive roots with respect to $B$.  Then $S = \{\alpha_1,\ldots, \alpha_{n-1}\}$ is the set of simple roots where $\alpha_i = \epsilon_i - \epsilon_{i+1}$, see \cite[Chapter 3]{lakshmibai2007standard}. Here $\epsilon_i$ is the character sending $t = (t_1,t_2,\ldots,t_n) \in T$ to $t_i \in {\mathbb C}^*$.  Let $\{\omega_1,\ldots,\omega_{n-1}\}$ be the set of fundamental weights.The unipotent subgroup $U$ is the subgroup of $B$ with diagonal entries 1, and $U^{-}$ is the unipotent subgroup of $B^{-}$ with diagonal entries 1. The unipotent group associated to a root $\beta \in R$ is denoted by $U_{\beta}$. 
Let $N_G(T)$ denote the normalizer of $T$ in $G$. The Weyl group $N_G(T)/T$ of $G$ with respect to $T$ is denoted by $W$.

For each simple root $\alpha_i$ we have a morphism $\phi_i: SL(2, \mathbb{C}) \rightarrow G$, with $\phi_i$ sending $M \in SL(2,\mathbb{C})$ to the $n \times n$ matrix having $M$ in rows and columns $i,i+1$, with the other diagonal entries being $1$ and the remaining entries zero. We use the following notation:
\begin{equation}
\label{eqn:notation}
 \dot{s}_{\alpha_i} = \phi_i\begin{pmatrix} 0 & -1 \\ 1  & 0  \end{pmatrix}\ .\\
\end{equation}
It is easily checked that $\dot{s}_{\alpha_i}$ is in $N_G(T)$. We use the notation $s_{\alpha_i}$ for the coset $\dot{s}_{\alpha_i}T$ in $N_G(T)/T$. The Weyl group $W$ is generated by the $s_{\alpha_i}$.

There is an isomorphism between $W$ and $S_n$, the group of permutations on $n$ symbols, with $s_{\alpha_i}$ mapping to the permutation
swapping $i$ with $i+1$. \eat{ So $W$ is identified with the permutation group $S_n$ and}For simplicity we use the notation $s_i$ for $s_{\alpha_i}$. We sometimes use the one line permutation notation  $(w(1), w(2), \ldots, w(n))$ to denote $w \in W$.  
Let $\{e_1,\ldots,e_n\}$ be the standard basis of $\mathbb{C}^n$. Note that for $r \in \{2,\ldots, n\}$, $P_{\hat{\alpha_r}} = \begin{bmatrix}
          * & * \\
	    0_{n-r,r}  &  * 
          \end{bmatrix}$ is the stabilizer of $<e_1,e_2,\ldots,e_r>$ in $G$. $G/P_{\hat{\alpha_r}}$ is the Grassmannian $\grass{r}{n}$, of $r$-dimensional subspaces of
$\mathbb{C}^n$.  $\grass{r}{n}$ is a projective variety and it carries a transitive action of $G$ making it a homogenous $G$-variety. We denote the subgroup of $W$ generated by simple reflections $s_{\alpha}, \alpha \in S\backslash\{\alpha_r\}$ by $W_{P_{\hat{\alpha_r}}}$. Let $W^{P_{\hat{\alpha_r}}} = \{ w \in W | w(\alpha_j) > 0$ for all $j \neq r \}$. In the sequel we fix $r$ and use the notation $P$ for $P_{\hat{\alpha_r}}$, and use the notation $G/P$ and $\grass{r}{n}$ interchangeably. 

It is well known that there is a bijection between the subsets of $S$ and parabolic subgroups of $G$. The parabolic subgroup corresponding to a subset $A \subseteq S$  is $P_{A}=\cap_{\alpha_i \not \in A} P_{\hat{\alpha_r}}$. The Weyl group $W_{P_{A}}$ of $P_{A}$ is the subgroup generated by simple reflections $\alpha \in A$ and there is a canonical bijection between $W^{P_A}$ and  $W/W_{P_A}$.

The Grassmannian $\grass{r}{n}$ comes with the Pl\"{u}cker embedding 
 $\grass{r}{n} \hookrightarrow \mathbb{P}(\bigwedge^r\mathbb{C}^n)$
 sending each $r$-dimensional subspace to its $r$-th exterior wedge
 product (see \cite{fulton1997young}).  The pull back of ${\cal O}(1)$ from the projective space to $\grass{r}{n}$ is the $T$-linearized line bundle on $G/P$ associated to the one dimensional representation of $P$ given by the character $\omega_r$. So we denote this line bundle on $G/P$ by ${\cal L}(\omega_r)$.

Let ${\cal L}$ be a $T$-linearized ample line bundle on $\grass{r}{n}$. A point $p \in \grass{r}{n}$ is said to be semistable with respect to the $T$-linearized line bundle ${\cal L}$ if there is a $T$-invariant section of a positive power of ${\cal L}$ which does not vanish at $p$.  We denote by $({\grass{r}{n}})^{ss}_T({\cal L})$ 
the set of all semistable points with respect to the $T$-linearized line bundle ${\cal L}$. A point in  $({\grass{r}{n}})^{ss}_T({\cal L})$ is said to be stable if its $T$-orbit is closed in $(\grass{r}{n}^{ss})_T(\cal L)$ and its stabilizer in $T$ is finite.  

Let $({\grass{r}{n}})^{s}_T({\cal L}(\omega_r))$ denote the set of all stable points with respect to the $T$-linearized line bundle ${\cal L}(\omega_r)$.
Skorobogatov \cite{skorobogatov1993swinnerton} and, independently, Kannan(see \cite{kannan1998torus} and \cite{kannan1999torus}) showed that when $r$ and $n$ are coprime semistability is the same as stability.

The $T$ fixed points in $G/P$ are $e_w=wP/P$ with $w \in W^{P}$. The $B$-orbit $C_w$ of $e_w$, is called a Bruhat cell and it is an affine space of dimension $l(w)$. The closure of $C_w$ in $\gmodp$ is the Schubert variety $X(w) \subseteq \grass{r}{n}$.  Now $W_{P}= S_r \times S_{n-r}$,  so the minimal length coset representatives of $W^P$ 
can be identified with $\{w \in W | w(1) < w(2) < \ldots < w(r), w(r+1) < w(r+2) < \ldots < w(n)\}$.  Let  $I(r,n) = \{(i_1,i_2,..i_r) | 1 \leq i_1 < i_2  \cdots <i_r \leq n \}$. Then there is a natural identification of  $W^P$ with $I(r,n)$ sending $w$ to $(w(1),w(2),\ldots,w(r))$.   

For a $T$-linearized line bundle ${\cal L}$ on $\grass{r}{n}$ we use the notation $X(w)^{ss}_T({\cal L})$ (respectively,  $X(w)^{s}_T({\cal L})$) to denote the semistable (respectively, stable) points in $X(w)$ with respect to the restriction of ${\cal{L}}$ to $X(w)$.

It is known from the work of Kumar~\cite{kumar2008descent} and also the work of Kannan and Sardar~\cite{kannan2009torusA} that the line bundle ${\cal L}(d \omega_r)$ descends to the the GIT quotient  $T \backslash\mkern-6mu\backslash X(w)^{ss}_T({\cal L}(d \omega_r))$ precisely when $n|d$.

Kannan and Sardar~\cite{kannan2009torusA} showed that there is a unique minimal Schubert Variety $X(w_{r,n})$ in $G_{r,n}$ admitting semistable points with respect to the line bundle $ \mathcal{L}(n\omega_r)$.
We denote by $(a_1,a_2,\ldots,a_r)$ the unique representative of $w_{r,n}$ in $W^P$ (see also Proposition~\ref{wrn}).
\eat{ Let $w_{r,n}=s_{a_1 -1}\ldots s_1  s_{a_2 -1} \ldots s_2 \ldots s_{a_r -1} \ldots s_r$ be a reduced expression for the unique minimal coset representative of $w_{r,n} \in W^P$}

Our paper is motivated by the question of when the GIT quotient $T \backslash\mkern-6mu\backslash X(w)^{ss}_T({\cal L}(n \omega_r))$ is smooth.\eat{Let $\tilde{ \mathcal{L}}(n\omega_r)$ be the descent of  $\mathcal{L}(n\omega_r)$ to the quotient variety $((\gmnmodt{r}{n})_T^{ss}({\cal L}(n\omega_r))$.} An understanding of the GIT quotient in the case $\text{gcd}(r,n) \neq 1$ is difficult since 
stability is different from semistability.  So we assume that $\text{gcd}(r,n)= 1$. Under this assumption Skorobogotov (see \cite{skorobogatov1993swinnerton}) and Kannan(see \cite{kannan2014git}) showed that the quotient variety $\gmnmodt{r}{n}_T^{ss}({\cal L}(n\omega_r))$ is smooth. In  \cite{bakshi2019torus}, it was shown that the GIT quotient $\xtaumodt{r}{n}^{ss}_{T}({\cal L}(n\omega_r))$ is smooth. In this paper we prove the following theorem.

\begin{theorem}
\label{thm:main}Let $w = (b_1,b_2,\ldots, b_r)$ with $b_i \geq a_i$ for all $i$. Let $X(v_1),\ldots, X(v_k)$, be the $k$ components in the singular locus of $X(w)$. Then the following are equivalent
\begin{itemize}
 \item[$(1)$] $\schbmodt{w}^{ss}_{T}({\cal L}(n\omega_r))$ is smooth.
 \item[$(2)$] For all $i$, we have $ v_i \ngeqslant w_{r,n}$.
 \item[$(3)$] Whenever $b_j \geq b_{j-1}+2$ we have $a_j \geq b_{j-1}+1$.
 \end{itemize} 
\end{theorem}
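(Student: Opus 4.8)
The plan is to prove $(1)\Leftrightarrow(2)$ by GIT/geometric arguments and $(2)\Leftrightarrow(3)$ by a combinatorial computation in the Bruhat order. Throughout, the hypothesis $b_i\geq a_i$ for all $i$ simply says $w\geqslant w_{r,n}$, so that $X(w)$ admits semistable points and $e_{w_{r,n}}\in X(w)$.

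\emph{Proof of $(1)\Leftrightarrow(2)$.} Since $\gcd(r,n)=1$, semistability coincides with stability on $X(w)$, and at every semistable point of $\grass{r}{n}$ the stabiliser in $T$ equals the centre of $G$ (see \cite{kannan1998torus,kannan2014git}); hence $T$ modulo its centre acts freely on $X(w)^{ss}_T({\cal L}(n\omega_r))$, so the quotient morphism $X(w)^{ss}_T({\cal L}(n\omega_r))\to \schbmodt{w}^{ss}_{T}({\cal L}(n\omega_r))$ is a Zariski locally trivial principal bundle under a torus. Consequently $\schbmodt{w}^{ss}_{T}({\cal L}(n\omega_r))$ is smooth if and only if $X(w)^{ss}_T({\cal L}(n\omega_r))$ is, and since the latter is an open subvariety of $X(w)$ it is smooth exactly when it is disjoint from $\mathrm{Sing}(X(w))=\bigcup_i X(v_i)$. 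Each $X(v_i)$ is a Schubert variety, and as the restriction maps $H^0(\grass{r}{n},{\cal L}(mn\omega_r))\to H^0(X(v_i),{\cal L}(mn\omega_r))$ are surjective and $T$-equivariant (projective normality of Schubert varieties, linear reductivity of $T$), a point of $X(v_i)$ is semistable for the restricted bundle iff it is semistable in $\grass{r}{n}$. Thus $X(v_i)$ meets $X(w)^{ss}_T({\cal L}(n\omega_r))$ iff $X(v_i)$ itself carries a semistable point, which by \cite{kannan2009torusA} happens iff $X(v_i)\supseteq X(w_{r,n})$, i.e.\ iff $v_i\geqslant w_{r,n}$. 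Hence the quotient is smooth iff $v_i\ngeqslant w_{r,n}$ for every $i$, which is $(2)$.

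\emph{Proof of $(2)\Leftrightarrow(3)$.} Call $i\in\{1,\dots,r-1\}$ \emph{essential} for $w=(b_1,\dots,b_r)$ if $b_{i+1}\geq b_i+2$. I will use the classical description of the singular locus of a Grassmannian Schubert variety (Lakshmibai--Weyman; see also Billey--Lakshmibai): $\mathrm{Sing}(X(w))$ is the union of the Schubert varieties $X(v^{(i)})$ over the essential indices $i$ with $b_i\geq i+1$, where $v^{(i)}$ is the largest element of $I(r,n)$ with $v^{(i)}\leq w$ and $v^{(i)}_{i+1}\leq b_i$; explicitly $v^{(i)}_k=b_k$ for $k>i+1$, $v^{(i)}_{i+1}=b_i$, and $v^{(i)}_k=\min\!\big(b_k,b_i-(i+1-k)\big)$ for $k\leq i$. (In the affine chart centred at a fixed point $e_v$ with $v\leq w$ one sees directly that $X(w)$ is cut out by minor equations of coordinate matrices $M_i$, one for each essential index, and that the chart is smooth at $e_v$ precisely when each rank condition is linear --- a genuine nonlinear determinantal condition never being absorbed by the linear ones.) Fix an essential $i$. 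For $k>i+1$ we have $v^{(i)}_k=b_k\geq a_k$; for $k\leq i+1$ the inequality $v^{(i)}_k\geq a_k$ reads $b_i\geq a_k+(i+1-k)$ (using $b_k\geq a_k$). As $a_1<a_2<\dots<a_r$ we have $a_{i+1}\geq a_k+(i+1-k)$ for all $k\leq i+1$, so all of these follow from the single inequality $b_i\geq a_{i+1}$. Therefore $v^{(i)}\geqslant w_{r,n}$ iff $b_i\geq a_{i+1}$ (which already forces $b_i\geq i+1$, since $a_{i+1}\geq i+1$, so this picks out one of the $v^{(i)}$ that actually occur). Hence $(2)$ fails iff some essential index $i$ satisfies $b_i\geq a_{i+1}$, i.e.\ iff there is $j\in\{2,\dots,r\}$ with $b_j\geq b_{j-1}+2$ and $a_j\leq b_{j-1}$ --- which is exactly the negation of $(3)$ (with the convention $b_0=0$ the case $j=1$ is vacuous). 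This proves $(2)\Leftrightarrow(3)$.

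I expect the main obstacle to be the precise determination of $\mathrm{Sing}(X(w))$ used above, in particular verifying that a nonlinear determinantal condition occurring in a local chart is never cancelled by the linear conditions coming from the other essential indices; this is where the minuscule structure of $\grass{r}{n}$ genuinely enters. Everything else reduces to bookkeeping with the Bruhat order and the elementary inequality $a_{i+1}\geq a_k+(i+1-k)$. It is worth recording that Steps above together show $(2)$ is equivalent to $X(w)$ being smooth at the single $T$-fixed point $e_{w_{r,n}}$: since $\mathrm{Sing}(X(w))$ is a union of Schubert varieties and $X(w_{r,n})$ is the minimal Schubert variety meeting the semistable locus, $\mathrm{Sing}(X(w))$ meets the semistable locus iff $X(w_{r,n})\subseteq \mathrm{Sing}(X(w))$, iff $e_{w_{r,n}}\in \mathrm{Sing}(X(w))$.
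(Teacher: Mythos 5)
Your proposal is correct and follows essentially the same route as the paper: $(1)\Leftrightarrow(2)$ via the free $T/Z(G)$-action making the quotient map a torus bundle over the semistable locus (so smoothness passes back and forth, and a singular component meets the semistable locus iff it dominates $w_{r,n}$), and $(2)\Leftrightarrow(3)$ via the Lakshmibai--Weyman description of the singular components together with Bruhat-order bookkeeping. Your explicit formula $v^{(i)}_k=\min(b_k,\,b_i-(i+1-k))$ agrees with the paper's hook-removal tuple $(b_1',\dots,b_r')$, and your reduction of $v^{(i)}\geqslant w_{r,n}$ to the single inequality $b_i\geq a_{i+1}$ (using $a_{i+1}\geq a_k+(i+1-k)$) is just a slightly more compact packaging of the paper's cascade argument.
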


\section{Semistable points in $G_{r,n}$ and the smooth locus of $X(w)$}

The following proposition was first proved  by Kannan and  Sardar, \cite{kannan2009torusA}. A simpler proof was given in \cite{bakshi2019torus}.
\begin{proposition}\cite{bakshi2019torus}
\label{wrn} Let $r$ and $n$ be coprime. There is a unique minimal Schubert Variety $X(w_{r,n})$ in $G_{r,n}$ admitting semistable points with respect to the $T$-linearized line bundle $ \mathcal{L}(n\omega_r)$. 
 As an element of $I(r,n)$, $w_{r,n}  = (a_1,a_2,\ldots,a_r)$ where $a_i$ is the smallest integer such that $a_i \cdot r \geq i \cdot n$. 
\end{proposition}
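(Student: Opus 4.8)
The plan is to convert semistability into a statement about convex hulls of $T$-weights and then to solve the resulting combinatorial optimization exactly. Recall that for $v=(v_1,\dots,v_r)\in I(r,n)$ the fibre of $\mathcal{L}(n\omega_r)$ at the $T$-fixed point $e_v$ carries the $T$-weight $n\,v(\omega_r)$, and in $X(T)$ we have $v(\omega_r)=\epsilon_{v_1}+\cdots+\epsilon_{v_r}$; write $\chi_v\in\{0,1\}^n$ for the associated indicator vector (with $1$'s in positions $v_1,\dots,v_r$). Since $\epsilon_1+\cdots+\epsilon_n=0$ in $X(T)$, the Hilbert--Mumford criterion for the torus $T$ states that a point $p\in\grass{r}{n}$ is semistable for $\mathcal{L}(n\omega_r)$ if and only if $0$ lies in the convex hull of $\{\chi_v : p_v(p)\neq 0\}$ inside $X(T)_{\mathbb{R}}=\mathbb{R}^n/\mathbb{R}\mathbf{1}$; as each $\chi_v$ has coordinate sum $r$, this is equivalent to $\tfrac{r}{n}\mathbf{1}\in\mathrm{conv}\{\chi_v : p_v(p)\neq 0\}$ in $\mathbb{R}^n$. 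On $X(w)$ the Plücker coordinate $p_v$ is not identically zero exactly when $v\leq w$, and since $X(w)$ is irreducible a generic point has all such $p_v$ nonzero. As the support of any point lies in $\{v\leq w\}$, I conclude that $X(w)$ admits a semistable point if and only if $\tfrac{r}{n}\mathbf{1}\in\mathrm{conv}\{\chi_v : v\leq w\}$, where I use that the Bruhat order on $W^{P}\cong I(r,n)$ is componentwise comparison.

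Because $\{v : v\leq w\}$ grows with $w$, the property of admitting a semistable point is upward closed in the Bruhat order. It therefore suffices to establish the equivalence that $\tfrac{r}{n}\mathbf{1}\in\mathrm{conv}\{\chi_v : v\leq w\}$ holds if and only if $w_i\geq a_i$ for all $i$, with $a_i$ the least integer satisfying $a_i r\geq in$. Granting this, the set of $w$ admitting semistable points is exactly $\{w : w\geq w_{r,n}\}$, whose unique minimum is $w_{r,n}=(a_1,\dots,a_r)$; this yields existence, uniqueness and the closed formula at once. I will prove the two directions separately.

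For necessity, fix a threshold $m$ and use the linear functional $f_m(x)=\sum_{i>m}x_i$. Over $\{\chi_v : v\leq w\}$ its maximum is attained at $v=w$ and equals $|\{j : w_j>m\}|$, while $f_m(\tfrac{r}{n}\mathbf{1})=\tfrac{r(n-m)}{n}$. Hence membership in the convex hull forces $|\{j : w_j>m\}|\geq \tfrac{r(n-m)}{n}$ for every $m$. Choosing $m=w_l$ gives $|\{j : w_j>w_l\}|=r-l$, so $r-l\geq r-\tfrac{r w_l}{n}$, that is $r w_l\geq ln$, i.e. $w_l\geq a_l$.

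For sufficiency it is enough, by monotonicity, to produce a semistable point on $X(w_{r,n})$, i.e. to write $\tfrac{r}{n}\mathbf{1}$ as a convex combination of indicator vectors $\chi_u$ with $u\leq w_{r,n}$. I take the $n$ cyclic shifts $S_0,\dots,S_{n-1}$ of the lower Christoffel (balanced) word of density $r/n$, whose $1$'s sit at positions $a_1,\dots,a_r$; any $n$ cyclic shifts cover each coordinate exactly $r$ times, so $\tfrac1n\sum_{t}\chi_{S_t}=\tfrac{r}{n}\mathbf{1}$ automatically. The substance is the claim that each shift, once sorted, satisfies $u\leq w_{r,n}$; in prefix form this reads $|S_t\cap\{1,\dots,m\}|\geq |\{k : a_k\leq m\}|=\lfloor mr/n\rfloor$ for all $m$. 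This is precisely the balance property of Christoffel/Sturmian words: every length-$m$ factor of the periodic word contains either $\lfloor mr/n\rfloor$ or $\lceil mr/n\rceil$ ones, and a prefix of a cyclic shift is exactly such a factor. I expect the main obstacle to be verifying this balance statement together with the identity $|\{k : a_k\leq m\}|=\lfloor mr/n\rfloor$, where coprimality of $r$ and $n$ is what makes $a_1<\cdots<a_r$ and pins down the word; the remaining steps are bookkeeping. Combining the two directions gives the desired equivalence, and hence the Proposition.
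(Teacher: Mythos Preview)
The paper does not supply its own proof of this proposition: it is quoted verbatim from \cite{bakshi2019torus} (with the original result attributed to Kannan--Sardar \cite{kannan2009torusA}), and the paper immediately moves on. So there is no in-paper argument to compare against.

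Your argument is correct and self-contained, and worth recording. The reduction to the convex-hull criterion is the standard Hilbert--Mumford/weight-polytope statement for torus actions on projective space, and your observation that on $X(w)$ the nonvanishing Pl\"ucker coordinates are exactly those indexed by $v\leq w$ (so that a generic point realises the full polytope $\mathrm{conv}\{\chi_v:v\leq w\}$) is the right way to pass from points to Schubert varieties. The necessity direction via the linear functionals $f_m$ is clean and gives exactly $w_l r\geq l n$. For sufficiency your cyclic-shift construction is correct: the identity $|\{k:a_k\leq m\}|=\lfloor mr/n\rfloor$ follows directly from $a_k=\lceil kn/r\rceil$ and $\gcd(r,n)=1$, and the balance inequality $|S_t\cap[1,m]|\geq \lfloor mr/n\rfloor$ reduces to $\lfloor (s+m)r/n\rfloor-\lfloor sr/n\rfloor\in\{\lfloor mr/n\rfloor,\lceil mr/n\rceil\}$, which is an elementary floor-function identity (no need to invoke Sturmian machinery by name). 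One cosmetic point: what you describe is indeed the lower Christoffel word, since $c_i=\lfloor ir/n\rfloor-\lfloor (i-1)r/n\rfloor=1$ exactly when $i=a_k$ for some $k$; you might state this computation rather than assert it.
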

Let $w=(b_1,\ldots,b_r)$. Clearly $X(w)$ has semistable points with respect to
 the $T$-linearized line bundle $ \mathcal{L}(n\omega_r)$ if and only if $b_i \geq a_i$ for all $i$.

Now if $w=(b_1,\ldots,b_r) \in I(r,n)$, one reduced expression for the Weyl group element in $W^P$ corresponding to $w$ is $(s_{b_1 -1 } \cdots s_1) \ldots (s_{b_i -1 } \cdots s_i)\ldots(s_{b_r -1} \cdots s_r)$ where a bracket is assumed to be empty is if $b_i -1$ is less than $i$. Since $a_i \geq i+1$ for all $1 \leq i \leq r$, we have
\[w_{r,n} = (s_{a_1 -1 } \cdots s_1) (s_{a_2 -1} \cdots s_2) \ldots (s_{a_r -1} \cdots s_r),\]
and no bracket is empty in the above expression.

In order to prove Theorem~\ref{thm:main},  we need to understand the components in the singular locus of $X(w)$. This is well understood. Since we are working with $G$ of Dynkin type $A$, $X(w)$ and its smooth locus $X(w)_{sm}$ can be described in terms of $G$ and the elements of $W^P$, or, one could use the language of Young tableaux and hooks. One can go from one viewpoint to the other easily, \eat{Since we did not find a reference which gives both these viewpoints} we do so in the next section and also describe the transition from one viewpoint to the other. 

\subsection{Smooth locus of Schubert varieties in $G_{r,n}$}
The singular loci of Schubert varieties in miniscule $G/P$ were determined in ~\cite{lakshmibai1990multiplicities}. There is another description of the singular locus of Schubert varieties $X(w)$ in terms of the stabiliser parabolic subgroup of $X(w)$ due to Brion and Polo (see~\cite{brion1999generic}). They proved the following theorem.
 
\begin{theorem}
\label{brionpolo}
Let $w \in W^P$. Let $P_{w} = \{g \in G | gX(w) = X(w)\}$, the stabilizer of $X(w)$ in $G$.
The smooth locus of the Schubert variety $X(w)$ is $X(w)_{sm}=P_{w} wP/P \subseteq X(w) \subseteq \grass{r}{n}$.
\end{theorem}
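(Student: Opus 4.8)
The plan is to prove the two inclusions $P_w\, e_w \subseteq X(w)_{sm}$ and $X(w)_{sm}\subseteq P_w\, e_w$ separately, where $e_w = wP/P$. The first inclusion is formal. Each $g\in P_w$ restricts to an automorphism of the variety $X(w)$, so it must carry the smooth locus onto itself; hence $X(w)_{sm}$ is $P_w$-stable. The open Bruhat cell $C_w = Be_w$ is an affine space of dimension $\ell(w)$, hence smooth, so $e_w\in X(w)_{sm}$. Applying $P_w$ gives $P_w e_w\subseteq X(w)_{sm}$.

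For the reverse inclusion I would first reduce to a statement about $T$-fixed points. Since $X(w)$ is $B$-stable we have $B\subseteq P_w$, and consequently both $X(w)_{sm}$ and $P_w e_w$ are open and $B$-stable. Now any $B$-stable subset $U$ of $X(w)$ is a union of Bruhat cells $C_\tau$, and is therefore completely determined by the set of $T$-fixed points it contains: if $e_\tau\in U$ then $C_\tau=Be_\tau\subseteq U$, and conversely. Hence it suffices to prove, for every $\tau\in W^P$ with $\tau\le w$, that $e_\tau\in X(w)_{sm}$ if and only if $e_\tau\in P_w e_w$. Writing $P_w = P_I$ for $I = \{\alpha\in S: s_\alpha w < w\}$ (the left descents of $w$), the $T$-fixed points of the orbit $P_w e_w$ are exactly the $e_\tau$ with $\tau$ the minimal $W^P$-representative of $uw$ for some $u\in W_I$; thus the right-hand condition reads $\tau\in W_I\, w$. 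The implication coming from $\tau\in W_I w$ is already contained in the first inclusion, so the content is the forward direction.

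The main work, and the step I expect to be the obstacle, is to show that smoothness of $X(w)$ at $e_\tau$ forces $\tau\in W_I w$. For this I would use the $T$-weight description of the Zariski tangent space together with the smoothness criterion $\dim T_{e_\tau}X(w) = \ell(w)$. Counting $T$-stable curves through $e_\tau$ (each the $T$-stable $\mathbb{P}^1$ joining $e_\tau$ to $e_{s_\beta\tau}$, which lies in $X(w)$ precisely when $s_\beta\tau\le w$) gives
\[
\dim T_{e_\tau}X(w) = \#\{\beta\in R^+\setminus R_P^+ : s_\beta\tau\le w\},
\]
the curves being identified with roots $\beta\in R^+\setminus R_P^+$ through the endpoint $e_{s_\beta\tau}$. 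The claim then becomes purely combinatorial: this count equals $\ell(w)$ exactly when $\tau\in W_I w$. I would argue that if $\tau\notin W_I w$ one can always exhibit a root $\beta$ with $s_\beta\tau\le w$ that is not matched by a cell dominating $e_\tau$, producing an excess tangent weight and hence $\dim T_{e_\tau}X(w) > \ell(w)$; while if $\tau\in W_I w$ the $W_I$-symmetry forces the count to be exactly $\ell(w)$.

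Finally, since $\gmodp$ here is the Grassmannian $\grass{r}{n}$, a minuscule $G/P$, I would specialise the weight count to make the crux transparent: the roots $\beta\in R^+\setminus R_P^+$ correspond to boxes of a Young diagram, the condition $s_\beta\tau\le w$ becomes a box-containment condition relative to the partition attached to $w$, and $\dim T_{e_\tau}X(w) = \ell(w)$ reduces to an explicit rectangle condition on these boxes. This renders the combinatorial step concrete and reconciles the description $X(w)_{sm} = P_w e_w$ with the singular-locus description of minuscule Schubert varieties in \cite{lakshmibai1990multiplicities}, giving an independent check. Everything outside this weight count is formal, flowing from the $P_w$-stability of the smooth locus and the cellular structure of $X(w)$.
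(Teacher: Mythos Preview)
The paper does not give a proof of this theorem at all: it is quoted as a result of Brion and Polo \cite{brion1999generic} and then used as a black box. So there is no ``paper's own proof'' to compare your attempt against.

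As a stand-alone attempt your outline is reasonable for the Grassmannian, but it is not a proof. The inclusion $P_w e_w\subseteq X(w)_{sm}$ is indeed formal. For the reverse inclusion you invoke the identity
\[
\dim T_{e_\tau}X(w)\;=\;\#\{\beta\in R^+\setminus R_P^+ : s_\beta\tau\le w\},
\]
but this is already a nontrivial theorem: in general the number of $T$-stable curves only bounds $\dim T_{e_\tau}X(w)$ from below, and the equality you use is specific to (co)minuscule $G/P$ (in type $A$ it is the Lakshmibai--Seshadri/Lakshmibai--Weyman description). You should cite it rather than assert it. More seriously, the ``purely combinatorial'' step---that the count equals $\ell(w)$ exactly when $\tau\in W_I w$---is only promised, not carried out; the sentence ``one can always exhibit a root $\beta$\ldots'' is precisely where the argument has to happen, and you have not supplied it. Note also that Brion and Polo's original argument is quite different in spirit: it proceeds via normal slices and the representation theory of the Levi of $P_w$, works uniformly for all $G/P$, and does not rely on the minuscule tangent-space formula. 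Your approach, if completed, would be a Grassmannian-specific shortcut that essentially rederives the Brion--Polo statement from the Lakshmibai--Weyman description rather than the other way around.
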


We recall the following proposition from~\cite{bai1974cohomology}.
\begin{proposition}
\label{lms74}
Let $w=(b_1,b_2,\ldots,b_r) \in W^P$. Define 
$$J'(w) := \{ j \in [1,\ldots,n-1] \big |  \exists m \ \text{with}\  j = b_m, j+1 \not = b_{m+1}\}.$$
Let $J(w) :=\{1,2,\ldots,n-1\}\setminus J'(w)$. Then $P_w = P_{J}$ where $J = \{ \alpha_j | j \in J(w)\}$
\end{proposition}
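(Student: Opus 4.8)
The plan is to exploit that $X(w)=\overline{C_w}=\overline{Be_w}$ is $B$-stable, so its stabiliser $P_w$ is a closed subgroup of $G$ containing $B$, hence a standard parabolic; such a subgroup equals $P_A$ for the unique subset $A=\{\alpha_j\in S\mid \dot{s}_{\alpha_j}\in P_w\}\subseteq S$. Thus it suffices to decide, for each $j\in\{1,\dots,n-1\}$, whether the minimal parabolic $P_{\{\alpha_j\}}=B\sqcup B\dot{s}_{\alpha_j}B$ stabilises $X(w)$, and to show that it does precisely when $j\in J(w)$. The standard input here --- obtained from the $\mathbb{P}^1$-fibration $G/B\to G/P_{\{\alpha_j\}}$, or directly from \cite{bai1974cohomology} --- is that for $w\in W^P$ one has $P_{\{\alpha_j\}}X(w)=X(w)\cup X(\overline{s_jw})$, where $\overline{s_jw}\in W^P$ is the minimal-length representative of the coset $s_jwW_P$. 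Consequently $X(w)$ is $P_{\{\alpha_j\}}$-stable if and only if $\overline{s_jw}\leq w$ in the Bruhat--Chevalley order; and since for a simple reflection $s_j$ the coset $s_jwW_P$ is always Bruhat-comparable to $wW_P$ (it is moved by at most one step), non-stability is equivalent to $\overline{s_jw}>w$.

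Next I would translate this into the combinatorics of $I(r,n)$, identifying $w=(b_1,\dots,b_r)$ with the $r$-subset $\{b_1<\cdots<b_r\}$ underlying the $T$-fixed point $e_w=\langle e_{b_1},\dots,e_{b_r}\rangle$; left multiplication by $s_j$ interchanges the values $j$ and $j+1$. If $\{j,j+1\}$ lies entirely inside or entirely outside $\{b_1,\dots,b_r\}$, the subset is unchanged, so $s_jwW_P=wW_P$ and $X(w)$ is $P_{\{\alpha_j\}}$-stable; note that, since the $b_i$ increase strictly, ``$j$ and $j+1$ both in the subset'' is the same as ``$j=b_m$ and $j+1=b_{m+1}$ for some $m$''. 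If exactly one of $j,j+1$ lies in the subset, then $s_jw$ corresponds to a genuinely different subset and again lies in $W^P$: when $j\notin\{b_i\}$ and $j+1=b_m$, the tuple of $s_jw$ is $(b_1,\dots,b_{m-1},j,b_{m+1},\dots,b_r)$, componentwise $\leq w$ and strictly smaller in coordinate $m$, so $\overline{s_jw}<w$ and $X(w)$ stays $P_{\{\alpha_j\}}$-stable; when $j=b_m$ and $j+1\notin\{b_i\}$ (equivalently $b_{m+1}\neq j+1$, with the convention that $m=r$ is allowed and then there is no $b_{r+1}$), the tuple of $s_jw$ is $(b_1,\dots,b_{m-1},j+1,b_{m+1},\dots,b_r)$, strictly larger in coordinate $m$, so $\overline{s_jw}>w$ and $X(w)$ is not $P_{\{\alpha_j\}}$-stable. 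For the two comparisons I would quote the standard description of the Bruhat order on $W^P\cong I(r,n)$ by componentwise $\leq$ of the increasing tuples.

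Assembling the cases, $\dot{s}_{\alpha_j}\notin P_w$ occurs exactly when there is an $m$ with $j=b_m$ and $j+1\neq b_{m+1}$, i.e.\ exactly when $j\in J'(w)$; hence $A=S\setminus\{\alpha_j\mid j\in J'(w)\}=\{\alpha_j\mid j\in J(w)\}$, and therefore $P_w=P_J$. The only genuinely delicate points are the correct handling of the boundary case $j=b_r$ (where ``$j+1\neq b_{r+1}$'' must be read as automatically satisfied, matching the definition of $J'(w)$), and a clean statement of the $P_{\{\alpha_j\}}$-stability criterion together with the comparability of $s_jwW_P$ and $wW_P$; once these are in place, the remainder is a routine finite case analysis.
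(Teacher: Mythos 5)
Your proposal is correct. Note that the paper itself offers no proof of this proposition --- it is simply recalled from \cite{bai1974cohomology} --- so there is nothing internal to compare against; your argument is the standard one and it goes through. The two reductions you make are both sound: $P_w$ is a closed subgroup containing $B$, hence equals $P_A$ for $A=\{\alpha_j \mid \dot{s}_{\alpha_j}\in P_w\}$, and $\dot{s}_{\alpha_j}\in P_w$ iff the minimal parabolic $P_{\{\alpha_j\}}$ stabilises $X(w)$, which via $P_{\{\alpha_j\}}X(w)=X\bigl(\max(w,\overline{s_jw})\bigr)$ and the componentwise description of the Bruhat order on $I(r,n)$ reduces to your four-case analysis on whether $j,j+1$ lie in $\{b_1,\dots,b_r\}$. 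Your handling of the boundary case $j=b_r$ (where $j+1\neq b_{r+1}$ is vacuous, and $j+1$ cannot lie elsewhere in the subset since $b_r$ is maximal) matches the intended reading of $J'(w)$, and the observation that strict increase of the $b_i$ forces ``$j,j+1$ both in the subset'' to mean $j=b_m$, $j+1=b_{m+1}$ closes the remaining gap between the set-theoretic and index-theoretic formulations.
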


We need some more notation to describe the work in~\cite{lakshmibai1990multiplicities}. Let $w = (b_1,b_2,\ldots,b_r)$. Associate to $w$ the increasing sequence $\bf{w} = ({ \bf b_1},{\bf b_2},\ldots,{\bf b_r})$ where ${\bf b_i} = b_{i}-i$, so $0 \leq {\bf b_1} \leq {\bf b_2} \leq \ldots \leq {\bf b_r} \leq n-r$. 
Clearly we have a bijective correspondence between $I(r,n)$ and non-decreasing $r$ length sequences in $0 \leq {\bf b_1} \leq {\bf b_2} \leq \ldots \leq {\bf b_r} \leq  n-r$. An increasing sequence gives us a Young diagram, $Y({\bf w})$, in
an $r \times {n-r}$ rectangle with the $i$-th row having ${\bf b_i}$ boxes\footnote{ rows are numbered $1, \ldots, r$ from bottom to top.}.  We call this the Young diagram corresponding to the Schubert variety $X(w)$.

Recall the following Theorem from \cite{lakshmibai1990multiplicities}\footnote{the notation we use is different from theirs, they work with non-increasing sequences}.
\begin{theorem}[Theorem 5.3 \cite{lakshmibai1990multiplicities}] 
\label{LW90}
Let $X(w)$ be a Schubert variety in the Grassmannian. 
Let $\bf{w} = (p_1^{q_1},\ldots, p_k^{q_k}) = (\underbrace{p_1,\ldots p_1}_\text{$q_1$ times}\ldots,\underbrace{p_k,\ldots p_k}_\text{$q_k$ times})$  be the non-zero parts of the increasing sequence ${\bf w}$ with 
$1 \leq  {\bf p_1} < {\bf p_2} \ldots < {\bf p_k} \leq n-r$. The singular locus  $X(w)$ consists of $k-1$ components. The components are given by the Schubert varieties corresponding to the  Young diagrams 
$Y({\bf w_1}), \ldots Y({\bf w_{k-1}}) $,  where the sequences ${\bf w_i}$ are given by
\[ {\bf w_i} = (p_1^{q_1},\ldots, p_{i-1}^{q_{i-1}},(p_i-1)^{q_i+1}, p_{i+1}^{q_{i+1}-1},p_{i+2}^{q_{i+2}},\ldots,p_r^{q_r}), \]
for 	$1 \leq i \leq r-1$ and $1 \leq p_i < p_{i+1}$.
\end{theorem}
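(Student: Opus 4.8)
The plan is to derive the description of the singular locus from the stabilizer-parabolic picture furnished by Theorem~\ref{brionpolo} and Proposition~\ref{lms74}, and then to match the resulting combinatorics against the Young-diagram formula in the statement. By Theorem~\ref{brionpolo} the smooth locus is the single orbit $X(w)_{sm}=P_w\,e_w$, and by Proposition~\ref{lms74} we have $P_w=P_J$ with $J=\{\alpha_j\mid j\in J(w)\}$. Since $P_J\supseteq B$, the orbit $P_J e_w$ is $B$-stable, hence a union of Bruhat cells; being open and dense of dimension $\ell(w)$ it is exactly $X(w)_{sm}$, and its complement $X(w)\setminus P_J e_w$ is closed and $P_J$-stable. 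Consequently the singular locus is a union of Schubert varieties, and its irreducible components are the $X(v)$ for the \emph{maximal} $v\in W^P$ with $v\le w$ and $e_v\notin P_J e_w$. Throughout I use the identification of the Bruhat order on $W^P$ with containment of Young diagrams, so that $e_v\le e_w$ iff ${\bf v}\subseteq{\bf w}$.

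First I would pin down which $T$-fixed points lie in the open orbit. A point $e_v$ lies in $P_J e_w$ precisely when the minimal coset representatives satisfy $vW_P\in W_J\,wW_P$; in partition language this says ${\bf v}$ is obtained from ${\bf w}$ under the action of the Levi Weyl group $W_J=\langle s_j\mid j\in J(w)\rangle$ on the subdiagrams of the $r\times(n-r)$ rectangle. The combinatorial input I would establish is that $J'(w)$ encodes the corners of the boundary path of $Y({\bf w})$: writing ${\bf w}=(p_1^{q_1},\ldots,p_k^{q_k})$, the $k-1$ transitions between consecutive blocks $p_i^{q_i}$ and $p_{i+1}^{q_{i+1}}$ are the \emph{inner (concave) corners}, and the simple reflections $s_j$ with $j\in J(w)$ are exactly those that permute boxes within a single block without crossing such a corner. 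From this I would deduce that the $W_J$-orbit of ${\bf w}$, intersected with the subdiagrams of ${\bf w}$, consists precisely of those $\mu\subseteq{\bf w}$ that do not cut across any inner corner; equivalently, $e_v$ is a singular point of $X(w)$ if and only if ${\bf v}\subseteq{\bf w}_i$ for some $i\in\{1,\ldots,k-1\}$, where ${\bf w}_i$ is the diagram displayed in the statement.

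It then remains to identify the maximal singular fixed points. For each $i$ I would verify directly that ${\bf w}_i\subseteq{\bf w}$, that $e_{v_i}\notin P_J e_w$ so that $X(v_i)$ lies in the singular locus, and that ${\bf w}_i$ is the largest subdiagram with this property at the $i$-th corner, so that every singular $e_v$ satisfies ${\bf v}\subseteq{\bf w}_i$ for some $i$. A short check shows the ${\bf w}_i$ are pairwise incomparable under containment, so the $X(v_i)$ are distinct irreducible components; since there are exactly $k-1$ inner corners this gives the asserted count. The passage from ${\bf w}$ to ${\bf w}_i$ --- deleting one row of length $p_{i+1}$ and shaving one box from each of the $q_i$ rows of length $p_i$ to form the block $(p_i-1)^{q_i+1}$ --- is precisely the operation that fills the notch at the $i$-th inner corner, which is why these are the maximal subdiagrams invisible to $W_J$.

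The main obstacle is the middle step: proving the equivalence ``$e_v$ is singular $\iff {\bf v}\subseteq{\bf w}_i$ for some $i$.'' This demands a careful translation of the action of $P_J$ on $T$-fixed points into partition combinatorics, together with the verification that the corners recorded by $J'(w)$ are exactly the obstructions to smoothness. An alternative route not relying on Brion--Polo is the original tangent-space computation: on the minuscule Grassmannian $\grass{r}{n}$ one identifies $\dim T_{e_v}X(w)$ with the number of roots $\beta\in R^+\setminus R_P^+$ (equivalently, boxes of the rectangle) for which $s_\beta v\le w$, and shows this exceeds $\ell(w)$ exactly on $\bigcup_i X(v_i)$. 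The bookkeeping of which reflections $s_\beta$ keep one below $w$ is the counterpart of the corner analysis above and constitutes the genuinely computational heart of either approach.
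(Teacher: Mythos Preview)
The paper does not prove this theorem at all: it is quoted verbatim as Theorem~5.3 of \cite{lakshmibai1990multiplicities} and used as a black box (alongside the Brion--Polo description, Theorem~\ref{brionpolo}, and Proposition~\ref{lms74}). So there is no ``paper's own proof'' to compare your proposal against; the authors simply cite the result and then restate it informally in terms of removing hooks at inner corners.

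Your sketch is a reasonable outline of how one might \emph{re}-derive the Lakshmibai--Weyman description from the Brion--Polo theorem, and it is not the route taken in the original reference (which computes tangent-space dimensions/multiplicities directly, the approach you mention at the end as an alternative). As a proof, however, it has the gap you yourself flag: the assertion that the $T$-fixed points of $P_J\,e_w$ lying below $w$ are exactly those $e_v$ whose diagram ``does not cut across any inner corner'' is stated but not established. Concretely, you need to show that for $v\le w$ one has $v\in W_J\,w\,W_P$ (equivalently $e_v\in P_J e_w$) if and only if ${\bf v}\not\subseteq{\bf w}_i$ for every $i$, and this bijection between the blocks of $J(w)$ and the ``free'' regions of $Y({\bf w})$ requires a genuine combinatorial argument, not just the heuristic you give. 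The maximality and pairwise incomparability of the ${\bf w}_i$ are straightforward once that equivalence is in hand, so the missing step is really the single one you identify.
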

An inner corner in a Young diagram is a box that, if it is removed, still gives us the Young diagram of an non-decreasing sequence.  So an easy to remember description of the irreducible components of the singular locus of
$X(w)$ is as follows :- they are the Schubert varieties in correspondence with Young diagram $Y({\bf w_i)}$ obtained from $Y({\bf w})$ by removing the hook from the $i$-th inner box to the $i+1$-st inner box.

 \section{Smoothness of GIT quotients}
 In this section we first give a criterion for the GIT quotient to be smooth. We then prove the main theorem by showing that if the combinatorial conditions in the statement of the main theorem hold this criterion is met. We assume that $r$ and $n$ coprime.
\begin{theorem}\label{smcri}
Let $w\in W^{P_{\hat{\alpha_r}}}$.  $\schbmodt{w}^{ss}_{T}({\cal L}(n\omega_r))$ is smooth if and only if $X(w)^{ss} \subseteq X(w)_{sm}$.
\end{theorem}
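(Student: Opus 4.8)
The plan is to prove both implications by relating smoothness of the quotient to smoothness of the total space along the semistable locus, using the fact that (by the result of Skorobogatov and Kannan quoted in the introduction) semistability equals stability when $\gcd(r,n)=1$, so $T$ acts on $X(w)^{ss}_T({\cal L}(n\omega_r))$ with finite stabilizers and closed orbits, and the GIT quotient is a geometric quotient. First I would record the easy direction: if $X(w)^{ss} \subseteq X(w)_{sm}$, then the semistable locus is a smooth variety on which $T$ acts with finite stabilizers, and a geometric quotient of a smooth variety by a reductive group acting with finite stabilizers is smooth (this is the standard Luna slice / étale-locally-a-quotient-by-a-finite-group argument: near a point $p$ the quotient looks étale-locally like $V/\Gamma$ for a finite group $\Gamma = \mathrm{Stab}_T(p)$ acting linearly on a slice $V$; but since we are over $\mathbb{C}$ and the action is free in codimension... ) — actually the cleanest route is to invoke Luna's slice theorem: $\schbmodt{w}^{ss}_T$ is étale-locally the quotient of a smooth slice by the finite stabilizer, hence has quotient singularities, and one must then argue these are in fact smooth points. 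Here I would use that $T$ is a torus: a finite subgroup $\Gamma$ of $T$ acting on a vector space $V$ does so through characters, and $V/\Gamma$ is smooth iff $\Gamma$ acts as a reflection group, which for a torus-subgroup acting diagonally forces — hmm, this is not automatic. So instead the right statement to lean on is the theorem (already used implicitly in \cite{bakshi2019torus}, \cite{kannan2014git}) that for the torus action here the geometric quotient of the \emph{smooth} semistable locus is smooth; I would cite the relevant lemma from those papers rather than reprove it.

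For the converse, I would argue by contrapositive: suppose $X(w)^{ss} \not\subseteq X(w)_{sm}$, so there is a semistable point $p$ lying in the singular locus of $X(w)$. The key point is that the singular locus of $X(w)$ is $T$-stable (it is $P_w$-stable by Brion–Polo, Theorem~\ref{brionpolo}, and $T \subseteq P_w$), so its image in the quotient is a well-defined closed subset, and I want to show the quotient is singular at the image $\bar p$ of $p$. The mechanism: since the quotient map $\pi$ is a geometric quotient and $T$ acts with finite stabilizer at $p$, $\pi$ is flat in a neighborhood of $p$ (or: étale-locally a finite quotient), so smoothness of the quotient at $\bar p$ would force smoothness of the fiber — more precisely, by the Luna slice theorem the local structure of the quotient at $\bar p$ is $S/\Gamma$ where $S$ is a locally closed smooth-or-not slice through $p$ transverse to the orbit and $\Gamma=\mathrm{Stab}_T(p)$ is finite; if $X(w)$ is singular at $p$ then $S$ is singular at $p$, and one must deduce $S/\Gamma$ is singular at the image. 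For a finite group this uses that $\dim_{\mathbb{C}} \mathfrak{m}_{\bar p}/\mathfrak{m}_{\bar p}^2 = \dim (\mathfrak{m}_p/\mathfrak{m}_p^2)^\Gamma$ together with a dimension count, plus the invariant-theoretic fact that the cotangent space of the quotient cannot drop below $\dim S$ — this is where I expect the main technical care to be needed.

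So the heart of the argument is the purely local claim: \emph{if $S$ is an affine variety with a finite group $\Gamma$ acting, $s \in S$ a fixed point, and $S$ is singular at $s$, then $S\sslash\Gamma = \mathrm{Spec}(\mathcal{O}(S)^\Gamma)$ is singular at the image of $s$.} I would prove this by the cotangent-space inequality: the map on Zariski cotangent spaces $\mathfrak{m}_s/\mathfrak{m}_s^2 \twoheadleftarrow \mathfrak{m}_{\bar s}/\mathfrak{m}_{\bar s}^2$ has image $(\mathfrak{m}_s/\mathfrak{m}_s^2)^\Gamma$ after averaging (since $|\Gamma|$ is invertible, taking $\Gamma$-invariants is exact), while $\dim S\sslash\Gamma = \dim S$; if $S\sslash\Gamma$ were smooth at $\bar s$ its cotangent space would have dimension $\dim S$, but $\dim (\mathfrak{m}_s/\mathfrak{m}_s^2)^\Gamma \leq \dim \mathfrak{m}_s/\mathfrak{m}_s^2$ with equality only if $\Gamma$ acts trivially on the cotangent space — and a finite subgroup of a torus acting on the tangent space at a fixed point of a torus-variety acts through characters, diagonalizably, so triviality on the cotangent space would make $s$ a fixed point with trivial local action, contradicting... no: the clean finish is that smoothness of $S\sslash\Gamma$ forces, via a theorem on quotient singularities (e.g. the fact that $\dim\mathfrak m_{\bar s}/\mathfrak m_{\bar s}^2\ge\dim S$ always, with equality iff smooth, combined with the surjection $\mathfrak m_{\bar s}/\mathfrak m_{\bar s}^2\to(\mathfrak m_s/\mathfrak m_s^2)^\Gamma$ and $\dim S=\dim S\sslash\Gamma$), that $S$ must already be smooth at $s$. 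I expect this local lemma, and in particular correctly handling the case where the slice $S$ is not reduced or the stabilizer $\Gamma$ acts with pseudoreflections, to be the main obstacle; one may need the coprimality hypothesis $\gcd(r,n)=1$ precisely to pin down the stabilizers $\Gamma$ (they are subgroups of $T$, and on a Schubert variety in a minuscule $G/P$ one can compute them explicitly) and rule out pseudoreflections. Once the local lemma is in hand, globalizing via Luna's slice theorem and the $T$-stability of $X(w)_{sm}$ completes both directions.
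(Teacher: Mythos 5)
Your proposal has a genuine gap in both directions, and the gap is the same in each: you treat the stabilizers of semistable points as unknown finite groups and then try to push smoothness through a finite quotient, whereas the paper's proof shows the stabilizers are actually \emph{trivial}. Concretely: writing a semistable point as $x = u_{\beta_1}(t_1)\cdots u_{\beta_k}(t_k)vP/P$ with all $t_j\neq 0$, the isotropy group $T_x$ is $\cap_j \ker(\beta_j)$; since semistable $=$ stable (coprimality), this is finite, and a finite intersection of kernels of roots must be the center $Z(G)$, which acts trivially on $G/P$ (pass to the adjoint torus). Hence the quotient map on $X(w)^{ss}$ is a $T$-torsor, and smoothness transfers in both directions with no slice theorem, no quotient-singularity analysis, and no pseudoreflection worries. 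You correctly sensed that ``geometric quotient of smooth by finite stabilizers is smooth'' is not automatic, but then you left the forward direction as a citation placeholder instead of resolving it.

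More seriously, the local lemma on which your converse rests --- \emph{if $S$ is singular at a $\Gamma$-fixed point $s$ with $\Gamma$ finite, then $S\sslash\Gamma$ is singular at $\bar s$} --- is false. Take $S=\{xy=z^2\}\subseteq \mathbb{A}^3$ (the $A_1$ singularity, i.e.\ $\mathbb{A}^2/\{\pm 1\}$ in coordinates $x=u^2$, $y=v^2$, $z=uv$) with $\Gamma=\mathbb{Z}/2$ acting by $z\mapsto -z$; then $S\sslash\Gamma=\mathrm{Spec}\,\mathbb{C}[x,y]$ is smooth although $S$ is singular at the origin. Your own cotangent computation shows where the argument breaks: smoothness of $S\sslash\Gamma$ only forces $\dim(\mathfrak{m}_s/\mathfrak{m}_s^2)^{\Gamma}=\dim S$, which does not force $\dim \mathfrak{m}_s/\mathfrak{m}_s^2=\dim S$ (in the example the invariant part is $2$-dimensional while the full cotangent space is $3$-dimensional). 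So the converse cannot be salvaged along these lines without first proving the stabilizers act trivially on the relevant slices --- which, once you know $T_x=Z(G)$, makes the whole Luna apparatus unnecessary: the map is a $T$-bundle, so singular points of $X(w)^{ss}$ map to singular points of the quotient, and since $\mathcal{O}_{x,X(w)^{ss}}=\mathcal{O}_{x,X(w)}$ this gives $X(w)^{ss}\subseteq X(w)_{sm}$.
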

\begin{proof}
Assume that $X(w)^{ss} \subseteq X(w)_{sm}$.  Since $gcd(r,n)=1$, it follows from ~\cite[Corollary 2.5]{skorobogatov1993swinnerton} and ~\cite[Theorem 3.3]{kannan1998torus} that $X(w)^{ss} = X(w)^s$. So the stablizer of all semistable points $x \in X(w)^{ss}$ is finite. The proof now follows along the lines described in ~\cite{kannan2014git}. Suppose $x \in B vP/P$ for some $v$.
Let $R^+(v^{-1})$ denote the set of all positive roots made negative by $v^{-1}$. And choose a subset $\beta_1,\dots, \beta_k$ of positive roots in $R^+(v^{-1})$ such that $x = u_{\beta_1}(t_1)\ldots,u_{\beta_k}(t_k)vP/P$ with $u_{\beta_j}(t_j)$ in the root subgroup $U_{\beta_j}$, $t_j \neq 0$ for $j = 1,\ldots, k$. The isotropy group $T_x$ is $\cap_{i=1}^{i=k} \text{ker}(\beta_j)$. Since this is finite, it follows from ~\cite[Example 3.3]{kannan2014git}  that $T_x = Z(G)$, the center of $G$. Working  with the adjoint group we may assume that the stablizer is trivial. So $\schbmodt{w}^{ss}_{T}({\cal L}(n\omega_r))$ is smooth.

For the converse, first note that since we are in the case $\text{gcd}(r,n)=1$ the quotient is a geometric quotient i.e. it is an orbit space. But then restricted to $X(w)^{ss}$ the quotient is a $T$-bundle. So smooth points go to smooth points in the quotient and non-smooth points go to non-smooth points. Since the quotient is smooth it follows that each point $x \in X(w)^{ss}$ is smooth in  $X(w)^{ss}$. Since ${\cal O}_{x, X(w)^{ss}} = {\cal O}_{x,X(w)}$, it follows that $X(w)^{ss} \subseteq X(w)_{sm}$.
\end{proof}

\eat{\begin{theorem}
Suppose $w= s_{b_1}s_{b_1 -1}\ldots s_1 s_{b_2} s_{b_2 -1} \ldots s_2 \ldots s_{a_r} s_{a_r -1} \ldots s_r$ and $X(s_{\alpha}) \subseteq X(w)$, for all simple roots $\alpha$. Suppose further that 
$b_i \geq a_i, \forall i, 1 \leq i \leq r$. Then $X(w)^{ss} \subseteq X(w)_{sm}$ if and only if the following holds - for all $1 \leq i \leq r-1$ if $b_{i+1} \geq b_i + 2$ and
$b_i \geq b_{i-1} +2$ then $a_{i+1} \geq 1 + b_i$. 
\end{theorem}
\begin{proof} 
Assume that $X(w)^{ss} \subseteq X(w)_{sm}$ and for some $i$ it is the case that $b_{i+1}= b_{i+2}$ and $b_i \geq b_{i-1} +2$.  We prove in this case that $a_{i+1} \geq 1 + b_2$. Later we show how the proof can be modified to handle the case $b_{i+1} \geq b_i + 2$. We construct a Schubert variety $X(v) \subseteq X(w)$ for which $X(v)^{ss}_T$ is nonempty if the condition $a_{i+1} \geq 1 + b_2$ does not hold. We show that $X(v)$ is not in smooth locus of $X(w)$ using Theorem ~\ref{brionpolo}, a contradiction. 
\end{proof}

We use a alternative approach 
We fix a few notations : 
}
We prove the main theorem.

\begin{theorem}Let $w = (b_1,b_2,\ldots, b_r)$ with $b_i \geq a_i$ for all $i$. Let Sing $X(w)$ have $k$ components $X(w_1),\ldots, X(w_k)$. Then the following are equivalent
\label{thm:main}
\begin{itemize}
 \item[$(1)$] $\schbmodt{w}^{ss}_{T}({\cal L}(n\omega_r))$ is smooth.
 \item[$(2)$] $\ w_i \ngeqslant w_{r,n} $ for all $i=1,\ldots,k$.
 \item[$(3)$] Whenever $b_j \geq b_{j-1}+2$ we have $a_j \geq b_{j-1}+1$.
 \end{itemize} 
\end{theorem}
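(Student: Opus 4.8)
The plan is to prove the cycle of implications $(1)\Leftrightarrow(2)$ and $(2)\Leftrightarrow(3)$, using Theorem~\ref{smcri} to reduce $(1)$ to the purely geometric statement $X(w)^{ss}\subseteq X(w)_{sm}$, and using Theorem~\ref{LW90} to get a concrete handle on the components $X(w_i)$ of $\mathrm{Sing}\,X(w)$. First I would establish the equivalence of $(1)$ and $(2)$. By Theorem~\ref{smcri}, $(1)$ holds iff $X(w)^{ss}\subseteq X(w)_{sm}$, i.e.\ iff the closed set $\mathrm{Sing}\,X(w)$ contains no semistable point. Since $\mathrm{Sing}\,X(w)=\bigcup_i X(w_i)$, this says no $X(w_i)$ contains a semistable point. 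By Proposition~\ref{wrn} and the remark following it, a Schubert variety $X(v)$ contains a semistable point for $\mathcal{L}(n\omega_r)$ precisely when $v\geq w_{r,n}$ in the Bruhat order (equivalently, writing $v=(c_1,\dots,c_r)\in I(r,n)$, when $c_i\geq a_i$ for all $i$). Hence $X(w_i)$ has no semistable point iff $w_i\ngeqslant w_{r,n}$, and $(1)\Leftrightarrow(2)$ follows immediately. The only subtlety here is making sure the "contains a semistable point'' criterion for a sub-Schubert-variety $X(v)\subseteq X(w)$ is exactly $v\geq w_{r,n}$; this is the content of Kannan--Sardar's minimality result recalled in Proposition~\ref{wrn}, since $X(v)^{ss}_T(\mathcal{L}(n\omega_r))\neq\emptyset \iff X(w_{r,n})\subseteq X(v) \iff w_{r,n}\leq v$.

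Next I would prove $(2)\Leftrightarrow(3)$, which is the combinatorial heart of the argument and where I expect the real work to lie. Here I would pass to the Young-diagram/increasing-sequence description: write $\mathbf{w}=(\mathbf{b}_1,\dots,\mathbf{b}_r)$ with $\mathbf{b}_i=b_i-i$, group the nonzero parts as $(p_1^{q_1},\dots,p_k^{q_k})$ with $p_1<\dots<p_k$, and recall from Theorem~\ref{LW90} that the $i$-th component $X(w_i)$ of the singular locus ($1\le i\le k-1$) corresponds to the sequence obtained by removing the hook from the $i$-th inner corner to the $(i+1)$-st, i.e.\ $\mathbf{w_i}=(p_1^{q_1},\dots,p_{i-1}^{q_{i-1}},(p_i-1)^{q_i+1},p_{i+1}^{q_{i+1}-1},p_{i+2}^{q_{i+2}},\dots)$. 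Translating back to $I(r,n)$, the sequence $w_i=(c_1,\dots,c_r)$ differs from $w$ in exactly the block of rows between the two inner corners: one coordinate drops. The semistability condition $w_i\ngeqslant w_{r,n}$ reads: there is some index $m$ in that block with $c_m<a_m$. Since $w\geq w_{r,n}$ (i.e.\ $b_m\geq a_m$ for all $m$) and the only coordinates that decrease going from $w$ to $w_i$ are those in the block, and they decrease by exactly reindexing a run of equal entries in $\mathbf{w}$, one computes that the minimum over the block of $c_m-a_m$ is governed by the first row $j$ of the run at the $(i+1)$-st inner corner. The key identity to extract is: across all $i$, the condition "$w_i\ngeqslant w_{r,n}$'' fails for some $i$ iff there is an index $j$ with $b_j\geq b_{j-1}+2$ (a genuine jump, so that $j$ begins a new run, i.e.\ sits at an inner corner preceded by a strictly shorter value) and $a_j\leq b_{j-1}$. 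This is precisely the negation of condition $(3)$. So the main obstacle is the careful bookkeeping: matching "inner corners of $Y(\mathbf{w})$'' with "indices $j$ where $b_j\geq b_{j-1}+2$'', tracking which coordinate of $w_i$ is smallest relative to the corresponding $a_m$, and verifying that $a_m$ is itself non-decreasing enough (from Proposition~\ref{wrn}, $a_m$ is the least integer with $a_m r\geq mn$, so $a_{m}\leq a_{m+1}\leq a_m+1$ roughly, which one must use to reduce "some $m$ in the block'' to "the top of the block'').

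To organize step $(2)\Leftrightarrow(3)$ cleanly, I would prove it contrapositive-wise in two directions. For $\neg(3)\Rightarrow\neg(2)$: given $j$ with $b_j\geq b_{j-1}+2$ and $a_j\leq b_{j-1}$, locate the inner corner/run of $\mathbf{w}$ containing row $j$ as its bottom row, identify the resulting index $i$, and show directly that the $w_i$ from Theorem~\ref{LW90} still satisfies $(w_i)_m\geq a_m$ for all $m$, hence $w_i\geq w_{r,n}$, contradicting $(2)$; the only coordinate one must check carefully is the one that dropped, and the hypothesis $a_j\le b_{j-1}$ is exactly what keeps it $\geq$ the corresponding $a$-value. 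For $\neg(2)\Rightarrow\neg(3)$: suppose some $w_i\geq w_{r,n}$; read off from the hook-removal description the unique row $j$ whose entry dropped, note $j$ must satisfy $b_j\geq b_{j-1}+2$ (otherwise row $j$ is not at an inner corner in the relevant sense, i.e.\ the hook removal does not touch it), and conclude from $(w_i)_j\geq a_j$ together with $(w_i)_j=b_j-1\le b_{j-1}$ (or the appropriate value determined by the run structure) that $a_j\leq b_{j-1}$, i.e.\ $\neg(3)$. Once both implications are in place, combining with $(1)\Leftrightarrow(2)$ from the first paragraph completes the proof. I would close by remarking that this also re-proves, as the special case $\mathrm{Sing}\,X(w)=\emptyset$, that the quotient of a smooth Schubert variety is smooth, recovering the result of~\cite{bakshi2019torus} for $X(w_{r,n})$.
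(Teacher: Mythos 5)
Your plan follows the paper's own proof essentially verbatim: $(1)\Leftrightarrow(2)$ via Theorem~\ref{smcri} together with the fact that $X(v)^{ss}\neq\emptyset$ iff $v\geq w_{r,n}$, and $(2)\Leftrightarrow(3)$ by translating the hook-removal description of Theorem~\ref{LW90} into the coordinates $(b_1',\dots,b_r')$, isolating the run of rows ending at the jump $b_j\geq b_{j-1}+2$, and propagating inequalities between the $a_m$ and $b_m$ along that run. One correction: the auxiliary fact you need is $a_{m+1}\geq a_m+1$ (immediate from $n>r$ and $a_m=\lceil mn/r\rceil$), not the ``$a_{m+1}\leq a_m+1$'' you wrote, which is false in general (e.g.\ $r=2$, $n=9$ gives $a_1=5$, $a_2=9$); with the inequality in the correct direction your reduction of ``some $m$ in the block'' to ``the top of the block'' is exactly the chain argument the paper uses.
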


\begin{proof} Since $b_i \geq a_i$ for all $i$ we have that $X(w)^{ss}$ is non empty.  From Theorem \ref{smcri}, $\schbmodt{w}^{ss}_{T}({\cal L}(n\omega_r))$ is smooth if and only if  $w_i \ngeqslant w_{r,n}$ for all $i$. Hence the equivalence of $(1)$ and $(2)$. 

We prove the equivalence of $(2), (3)$. The components of the singular locus of $X(w)$ are Schubert varieties $X(w_i)$ in correspondence with diagrams obtained from $Y({\bf w})$ by removing hooks. There is a hook at row $j$ of $Y({\bf w})$  if and only if $b_j \geq b_{j-1}+2$. We denote the Schubert variety obtained from $w$ by removing the hook at row $j$ by $X(w_j)$. Let the word corresponding to it in $I(r,n)$ be $(b_1', b_2',\ldots, b_r')$. \eat{Clearly $b_j' - j = b_{j-1} - (j-1) -1$ and so $b_j' = b_{j-1}$. }
Now $X(w)$ contains $X(w_{r,n})$.\eat{ and $X(w_j)$ does not contain $X(w_{r,n})$.} Let $t$ be the smallest integer less than $j$ such that $b_{k+1}= b_k +1$ for all $t \leq k < j$. By definition of $w_j$ we have 
\[b'_p = \begin{cases} b_p & \text{$1 \leq p \leq t-1$},\\
b_p -1 & \text{$t \leq p \leq j-1$},\\
b_{j-1} &\text{$p = j$},\\
b_p& \text{$j+1 \leq p \leq r.$}
\end{cases}  
\]

If $a_j \geq b_{j-1}+1$ then $X(w_j)$ does not contain $X(w_{r,n})$ since $b'_j = b_{j-1} < a_j$. Since $(b_j \geq b_{j-1}+2) \implies a_j \geq b_{j-1} + 1$, it follows that $3 \implies 2$.  It remains to prove the converse. $X(w)$ contains $X(w_{r,n})$ so $b_p \geq a_p$ for $1 \leq p \leq r$.  So $X(w_j)$ does not contain $X(w_{r,n})$ if and only if $a_p \geq b_p' + 1 = b_p$, for some $t \leq p \leq j-1$ or if $a_j \geq b'_j +1 = b_{j-1}+1$.  Now $b_p = b_{p-1}+1$ for all $t < p \leq j-1$, and $a_{p} \geq a_{p-1} +1$.  It follows that if for some $t \leq p \leq j-1$, $a_p \geq b_p$ then $a_{p+1} \geq a_p+1 \geq b_p + 1 = b_{p+1}$, and so we conclude that 
$a_{j-1} \geq b_{j-1}$. Then $a_j \geq a_{j-1}+1 \geq b_{j-1}+1$, completing the proof of $2 \implies 3$.

\eat{ We show that this can happen if and only if
$a_{j}  > b'_j$, and since $b_j' = b_{j-1}$ we are done.  If $a_j \leq b'_j$ then $a_{j-1} \leq a_j - 1 \leq b'_j - 1 = b_{j-1}'$.  One can easily show that $a_i \leq b_i' $ for all the other $i$ as well,  contradicting the fact that $X(w_j)$ does not contain $X(w_{r,n})$.} 

An alternate proof of the equivalence of $(2), (3)$ is as follows. 
First assume that for all $j$ for which $b_{j} \geq b_{j-1} + 2$ we have $a_j \geq b_{j-1}+1$.
Now let $u\in W^{P}$ be such that $w_{r,n}\leq u \leq w$. Let the
one line notation for $u$ be $(b_{1}', b_{2}', \ldots, b_{r}')$. Then $a_i \leq b_{i}' \leq b_i$ for all $1\leq i \leq r$.
Define $u_i = s_{b'_i } s_{b'_i +1} \ldots s_{b_i -1}$ for $1 \leq i \leq r$.  Clearly 
$u=u_1(s_{b_{1}-1}\cdots
s_{1})u_{2}(s_{b_{2}-1}\cdots s_{2})\cdots u_{r}(s_{b_{r}-1}\cdots
s_{r})$. 

For every $1 < i \leq  r$, the index of the least simple reflection less than $u_{i}$ in the Bruhat order is $s_{b'_{i}}$ and the index of 
the largest simple reflection less than $u_{i-1}$ in the Bruhat order is $s_{b_{i-1}-1}$. 
Take any $1\leq i \leq r$ for which $b_i \geq b_{i-1}+2$. By our hypothesis we have $b_{i}'\geq
b_{i-1}+1$, so $s_{b_{i-1}} \not \leq u_i$ and $u_i \in P_w$ from Proposition~\ref{lms74}. Further $u_{i}$ and $u_{i-1}$ commute. For each $1 < i \leq r$ for which $b_i = b_{i-1}+1$, $u_i \in P_w$ from Proposition~\ref{lms74}. Clearly  $u_1 \in P_w$. So for all $i$, $u_i \in P_w$. It is easy to check that $u=u_ru_{r-1}\ldots u_2 u_1 w P/P$. Therefore, by Theorem 2.2, $X(v)\subset X(w)_{sm}$.

Now assume that $b_{j} \geq b_{j-1} + 2$ but $a_j \leq b_{j-1}$.
iI follows from the definition of  $J'(w)$ in Proposition~\ref{lms74} that $b_{j-1} \in J'(w)$. 
Let $t$ be the smallest integer less than $j$ such that $b_{k+1}= b_k +1$ for all $t \leq k < j$. Then $w$ has a reduced expression of the form
$$w=w" s_{b_t-1} \ldots s_t s_{b_{t+1}-1} \ldots s_{t+1}\ldots s_{b_{j-1} -1} \ldots s_{j-1} s_{b_j -1} \ldots s_j w'.$$ 
Now consider the Weyl group element
$$u=w" s_{b_t-2} \ldots s_t s_{b_{t+1}-2} \ldots s_{t+1}\ldots s_{b_{j-1} -2} \ldots s_{j-1} s_{b_{j-1} -1} \ldots s_j w'.$$ 

Clearly $u \leq w$ and $u$ is obtained from $w$ by left multiplying with the reduced word 
$$  s_{b_{j-1} } s_{b_{j-1} +1 } \ldots  s_{b_j-2} s_{b_j-1}\ldots  s_{b_{t+2}-1} s_{b_{t+1}-1} s_{b_t-1}.$$
In the one line notation $u=(b_1,\ldots,b_{t-1}, b_t-1,\ldots,b_{j-1}-1, b_{j-1}, b_{j+1},\ldots,b_r)$. 
Note that $J'(u) \subseteq J'(w)$ \eat{and the inclusion is proper if $b_t = b_{t-1}+2$}, so $P_w \subseteq P_u$ and therefore $P_w$ stabilises $X(u)$. Since $u < w$, $w$ is not element of $X(u)$. And so $wP/P \notin P_wuP/P$.\eat{ Since $u < w$, it follows that $wP/P \not \in P_w uP/P$.}  Hence  $uP/P \notin P_wwP/P$. Therefore, by 
Theorem~\ref{brionpolo}, $X(u)$ is in the singular locus of $X(w)$.
\eat{The above word is not in $P_w$ since $b_{j-1} \in J'(w)$. It follows from Theorem~\ref{brionpolo} and Proposition~\ref{lms74} that $X(u)$ in not the smooth locus.}However, if $a_j \leq b_{j-1}$, it can be easily seen that $u \geq w_{r,n}$, implying that $X(u)$ contains a semistable point, a contradiction.
\end{proof}

\section{Examples and non-examples}
We illustrate the proof of the main theorem with a simple example.

\begin{example}
Consider the Schubert variety corresponding to $w=(3,5,7,9)$ in $I(4,9)$. The Young diagram associated to $w$ is given by the increasing sequence ${\bf w} = ({\bf 2}, {\bf 3},{\bf 4}, {\bf 5})$.
Fill this diagram starting with $s_i$ at the leftmost box in row $i$, and filling the boxes to the right of this entry in row $i$ with $s_{i+1},s_{i+2},\ldots,$ in order, all the way to the last box in row $i$. We get the filling
\[
\begin{Young}
$s_4$&$s_5$&$s_6$&$s_7$&$s_8$\cr
$s_3$&$s_4$&$s_5$&$s_6$\cr
$s_2$&$s_3$&$s_4$\cr
$s_1$&$s_2$\cr
\end{Young}
\]
Reading the entries in the Young diagram from right to left in each row, and bottom to top yields $s_2 s_1s_4s_3s_2s_6s_5s_4s_3s_8s_7s_6s_5s_4$, the element in $W^P$ corresponding to the Schubert variety $(3,5,7,9)$.
According to Theorem~\ref{LW90} the singular locus of this Schubert variety has three irreducible components given by the sequences $(1,1,4,5)$,$(2,2,2,5)$ and $(2,3,3,3)$. The corresponding Schubert varieties are given by
the tuples $(2,3,7,9)$, $(3,4,5,9)$ and $(3,5,6,7)$, respectively. The Weyl group elements corresponding to these varieties are $s_1s_2s_6s_5s_4s_3s_8s_7s_6s_5s_4$, $s_2s_1s_3s_2s_4s_3s_8s_7s_6s_5s_4$ and 
$s_2 s_1s_4s_3s_2s_5s_4s_3s_6s_5s_4$ respectively. Note that these words can be obtained by removing the hooks occupied by $s_2s_3s_4$, $s_4s_5s_6$ and $s_6s_7s_8$, respectively, and reading the entries left in the resulting Young diagrams from bottom to top, and right to left in each row - exactly as
we did for $w$.

Let us show for example that the Schubert variety corresponding to the Weyl group element $v=s_1s_2s_6s_5s_4s_3s_8s_7s_6s_5s_4$ is not in the smooth locus by showing that it does not satisfy the hypothesis of Theorem~\ref{brionpolo}. The stabilizer of $X(w)$ is the parabolic subgroup corresponding to 
the subset of simple reflections $\{\alpha | s_{\alpha} w \leq w\}$. In this case it can be checked that this is the parabolic subgroup corresponding to $\{\alpha_1,\alpha_2, \alpha_4, \alpha_6, \alpha_8\}$ which is 
$P_{\hat{\alpha_3}} \cap P_{\hat{\alpha_5}} \cap P_{\hat{\alpha_7}}$. However $v$ is obtained from $w$ by multiplying on the left with $s_3 s_4 s_2$. And this element is not in  $P_{\hat{\alpha_3}} \cap P_{\hat{\alpha_5}} \cap P_{\hat{\alpha_7}}$. It can be similarly shown that the other two components are also not in the smooth locus - the Weyl group elements corresponding to them are obtained from $w$ by multiplying on the left with 
$s_4s_6s_5$ and  $s_6 s_8 s_7$ respectively and these elements are clearly not in $P_{\hat{\alpha_3}} \cap P_{\hat{\alpha_5}} \cap P_{\hat{\alpha_7}}$.
 \end{example}

We conclude with examples of Schubert varieties in $\grass{4}{9}$ whose GIT quotients are singular, and examples of Schubert varieties whose GIT quotients are smooth.

\begin{example} We know from \ref{wrn} that $w_{4,9} = (3,5,7,9)$. A reduced expression for the word  $w_{4,9}$ is 
\[ s_{2}s_{1}s_{4}s_{3}s_{2}s_{6}s_{5}s_{4}s_{3}s_{8}s_{7}s_{6}s_{5}s_{4}. \]
The Young diagram $Y(\bf {w_{4,9}})$ corresponding to $w_{4,9}$ is
\begin{eqnarray*}
 \yng(5,4,3,2)\\ .
\end{eqnarray*}
Recall from Theorem 3.1 \cite{bakshi2019torus} we have  $\schbmodt{w}^{ss}_{T}({\cal L}(9\omega_4))$ is smooth.
\end{example}
\begin{example}
Let us consider the word $w = (5,7,8,9)$. A reduced expression for $w$ is 
\[ s_{4}s_{3}s_{2}s_{1}s_{6}s_{5}s_{4}s_{3}s_{2}s_{7}s_{6}s_{5}s_{4}s_{3}s_{8}s_{7}s_{6}s_{5}s_{4}. \]
The Young diagram $Y(\bf w)$ is 
\begin{eqnarray*}
\yng(5,5,5,4)\\ 
\end{eqnarray*}

The singular locus $X(w)$, obtained by removing the only hook corresponds the following tableau:
\begin{eqnarray*}
\yng(5,5,3,3)\\ 
\end{eqnarray*}
Here $w^{'} = (4,5,8,9)$. Since $w^{'} > w_{4,9}$,  $X(w^{'})$ contains semistable points and hence the quotient space $\schbmodt{w}^{ss}_{T}({\cal L}(9\omega_4))$ is not smooth (using \ref{thm:main}). 

\end{example}

\begin{example} 
Consider the word $w = (3,5,8,9)$.  A reduced expression for $w$ is 
\[ s_{2}s_{1}s_{4}s_{3}s_{2}s_{7}s_{6}s_{5}s_{4}s_{3}s_{8}s_{7}s_{6}s_{5}s_{4}. \]
The Young diagram $Y(\bf w)$ is 
\begin{eqnarray*}
 \yng(5,5,3,2)\\ 
\end{eqnarray*}
The singular locus obtained by removing the hooks has Schubert varieties $X(w_1), X(w_2)$, whose Young diagrams are given by the following tableaux.
\begin{eqnarray*}
\yng(5,5,1,1) \ \ \ \  \yng(5,2,2,2)\\ 
\end{eqnarray*} 
Here $w_1 = (2,3,8,9)$ and $w_2 = (3,4,5,9)$. Note for $i = 1,2$ $w_i \ngtr w_{4,9}$, so neither $X(w_1)$ nor $X(w_2)$ contain semistable points. Hence the quotient space $\schbmodt{w}^{ss}_{T}({\cal L}(9\omega_4))$ is smooth (using Theorem \ref{thm:main}). 

\end{example}.

\bibliography{references}
\bibliographystyle{plain}

\eat{\appendix
We use the following lemma from Brion and Polo~\cite{brion1999generic}.
\begin{lemma}\label{l.braid}
Fix $1 \leq j < k \leq r$. Let $v = s_{a_j -1} \ldots s_j s_{a_{j+1}-1} \ldots s_{j+1} \ldots s_{a_k -1} \ldots s_k$. Let $j \leq {\ell} \leq a_j -2$. Then 
$s_{\ell} v= v s_t$ for some $t \neq k$, $s_t \leq v$. 

\end{lemma}

We use this to prove the following observation.
\begin{observation}
Fix $1 \leq j < k \leq r$. Let $v = s_{a_j -1} \ldots s_j s_{a_{j+1}-1} \ldots s_{j+1} \ldots s_{a_k-1} \ldots s_k$. Let $j \leq {\ell} \leq a_j -1$.  Then in $W^{P_{\hat{{\alpha_k}}}}$ we have $s_{\ell} v \equiv v\  \text{mod}\  P_{\hat{{\alpha_k}}}$.
\end{observation}
\begin{proof}
The proof is by induction on $k-j$. Write $v = s_{a_j -1} \ldots s_j v'$. Using $s_{\ell} s_{\ell+1}s_{\ell} = s_{\ell+1} s_{\ell} s_{\ell+1}$,  we have $s_{\ell}v = s_{a_j -1} \ldots s_j s_{\ell+1} v'$. Now $v' = s_{a_{j+1}-1} \ldots s_{j+1} \ldots s_{a_k-1} \ldots s_k$. Since $a_j < a_{j+1}$ we have
$j+1 \leq \ell +1 \leq a_j < a_{j+1}$. So by induction $s_{\ell+1} v' \equiv v' \ \text{mod} \ P_{\hat{{\alpha_k}}}$ and so $s_{\ell}v \equiv  s_{a_j -1} \ldots s_j v' \ \text{mod} \ P_{\hat{{\alpha_k}}} \equiv  v \ \text{mod} \ P_{\hat{{\alpha_k}}}$, as required.
\end{proof} 
 
 First, a simple observation.
}
\end{document}